\newtheorem{thrm}{Theorem}[section]
   \newtheorem{fact}[thrm]{Proposition}
   \newtheorem{col}[thrm]{Corollary}
\theoremstyle{definition}
\newtheorem{ex}[thrm]{Example}
\newcommand{\cc}{\mathfrak{c}}
\newcommand{\PP}{\mathcal{P}}
\newcommand{\RR}{\mathbb{R}}
\begin{document}

\title[Some remarks on universality properties of $\ell_\infty / c_0$]{Some remarks on universality properties of $\ell_\infty / c_0$}

\author[M.\ Krupski]{Miko\l aj Krupski}
\address{Institute of Mathematics\\ Polish Academy of Sciences\\ \newline Ul. \'Sniadeckich 8\\00--956 Warszawa\\ Poland }
\email{krupski@impan.pl}

\author[W.\ Marciszewski]{Witold Marciszewski}
\address{Institute of Mathematics\\
University of Warsaw\\ Banacha 2\newline 02--097 Warszawa\\
Poland}
\email{wmarcisz@mimuw.edu.pl}

\date{}

\begin{abstract}
We prove that if $\cc$ is not a Kunen cardinal, then there is a uniform Eberlein compact space $K$ such that
the Banach space $C(K)$ does not embed isometrically into $\ell_\infty/c_0$.
We prove a similar result for isomorphic embeddings.
We also construct a consistent example of a uniform Eberlein
compactum whose space of continuous functions embeds isomorphically into $\ell_\infty/c_0$, but fails to embed isometrically.
As far as we know it is the first example of this kind.
\end{abstract}

\subjclass[2010]{Primary 46B26, 46E15; Secondary 03E75}

\keywords{$C(K)$ spaces, uniform Eberlein compact, Kunen cardinal,
universal space}

\maketitle

\section{Introduction}
For a compact space $K$ we denote by $C(K)$ the Banach space of continuous functions on $K$ with the supremum norm.
We say that a Banach space $X$ is universal (isometrically universal) for the class $\mathcal{K}$ of compact spaces if
for any $K\in\mathcal{K}$ the space $C(K)$ embeds isomorphically (isometrically) into the space $X$.

In this note we deal with the universality properties of the space $\ell_\infty/c_0$.
Let us recall a classical result of Parovi\v{c}enko that the space $C(\beta\omega\setminus \omega)$, which is isometric to $\ell_\infty/c_0$,
is isometrically universal for the class of compact spaces
of weight $\aleph_1$. It is natural to ask are there any other classes of spaces for which $\ell_\infty/c_0$ is universal.
Clearly this question makes sense only if we restrict ourselves to spaces of weight $\leqslant\cc$.

It has been shown by C.\ Brech and P.\ Koszmider in \cite{BK} that consistently it is not the case for the class of uniform Eberlein compacta.
In fact they proved more: consistently there is no universal space for the class of uniform Eberlein compacta.

Recently S.\ Todor\v{c}evi\'c found a beautiful connection between universality properties of $\ell_\infty / c_0$ and properties
of $\sigma$-field of subsets of $\RR^n$ generated by sets of the form $A_1\times \ldots \times A_n$ where $A_1,\ldots ,A_n\subseteq \RR$ (see \cite{T}).
More precisely, he proved that if $\cc$ is not a Kunen cardinal\footnote{see section 2 for definitions and notation},
then $\ell_\infty / c_0$ is not an isometrically universal space
for Corson compacta. He also proved, under another set-theoretical assumption involving
$\sigma$-fields of subsets of $\RR^n$, a similar result for isomorphic rather than isometric embeddings.

In this note we strengthen Todor\v{c}evi\'c's results by a simple modification of his argument. We prove that if $\cc$ is not a Kunen
cardinal, then $\ell_\infty / c_0$ is not a universal space for the class of uniform Eberlein compacta. In particular, by a different approach,
we get a mentioned result of C.\ Brech and P.\ Koszmider.
Our considerations lead to a consistent example of a space of continuous functions on a uniform Eberlein compactum, which distinguishes
isometric from isomorphic embeddings into $\ell_\infty / c_0$. In particular the uniform Eberlein compactum we get is not a continuous
image of $\beta\omega\setminus\omega$ yet its space of continuous functions
embeds isomorphically into $\ell_\infty/c_0$. As far as we know it is the first example of this kind.

\section{Preliminaries}
We use a standard set-theoretical and topological notation. Given a set $A$ and a positive integer $n$ we denote by $[A]^n$ ($[A]^{\leqslant n}$)
the family
of all subsets of $A$ of cardinality $n$ ($\leqslant n$). By $\cc$ we denote the cardinal number $2^{\aleph_0}$ (the continuum).

Let us recall that a compact space is \textit{uniform Eberlein} if it is homeomorphic to a subset of a Hilbert space in its weak topology
(see \cite{Ne}). Equivalently, a space is a uniform Eberlein compactum if it can be embedded into a space
$$B(\Gamma)=\{x\in[-1,1]^\Gamma:\sum_{\gamma\in\Gamma}|x_\gamma|\leqslant 1\}$$
for some index set $\Gamma$. Indeed, the above space is homeomorphic to a ball in the space $(\ell_2, weak)$.
A well known example of a uniform Eberlein compactum is the following. Take
a natural number $n$ and an infinite set $\Gamma$ and put
$$\sigma_n(\Gamma)=\{x\in\{0,1\}^\Gamma:\lvert\{\gamma\in\Gamma:x_\gamma\neq 0\}\rvert\leqslant n\}\text{.}$$
This space, being homeomorphic to $B(\Gamma)\cap\{0,\frac{1}{n}\}^{\Gamma}$, is uniform Eberlein compact.
The following, probably well known fact about uniform Eberlein compacta will be useful. It says that
this class of spaces is closed under taking a one point compactification of a discrete sum.

\begin{fact}\label{sumakompaktow}
Let $T$ be an arbitrary set of indexes and suppose $K_t$ is a uniform Eberlein compactum for every $t\in T$.
Then the space
$K=\bigoplus_{t\in T}K_t\cup\{\infty\}$ (i.e. one point compactification of a discrete sum
of $K_t\text{'s}$) is uniform Eberlein compact.
\end{fact}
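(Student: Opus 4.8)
The plan is to use the characterization of uniform Eberlein compacta as subspaces of the spaces $B(\Gamma)$, together with a disjointification of the index sets. First I would fix, for each $t \in T$, an embedding $e_t \colon K_t \hookrightarrow B(\Gamma_t)$ for a suitable index set $\Gamma_t$; by replacing $\Gamma_t$ with a disjoint copy if necessary, I may assume the $\Gamma_t$ are pairwise disjoint. Set $\Gamma = \bigcup_{t\in T}\Gamma_t$. The natural idea is to send a point $x \in K_t$ to the element of $[-1,1]^\Gamma$ that agrees with $e_t(x)$ on the block $\Gamma_t$ and is zero on every other block, and to send the point at infinity to the zero function. Since each $e_t(x)$ satisfies $\sum_{\gamma\in\Gamma_t}|x_\gamma|\leqslant 1$ and the blocks are disjoint, the image still lies in $B(\Gamma)$.

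Next I would check that the resulting map $e \colon K \to B(\Gamma)$ is a homeomorphic embedding. Injectivity is immediate from the disjointness of the blocks and the injectivity of each $e_t$. For continuity and for the embedding property, the key point is behaviour near $\infty$: a basic neighbourhood of $\infty$ in $K$ has complement contained in a finite union $K_{t_1}\cup\dots\cup K_{t_m}$, so one must show that $e(\infty)=0$ has a $B(\Gamma)$-neighbourhood whose preimage avoids all but finitely many $K_t$. This follows because any point of $B(\Gamma)$ with $\sum_\gamma |x_\gamma|$ bounded away from, say, a small $\varepsilon$ on a single block cannot be close to $0$ in the product topology unless that block's contribution is small — more precisely, one uses that for $x\in e(K_t)$ with $t$ outside a finite set, the coordinates on $\Gamma_{t'}$ for the relevant $t'$ are all $0$. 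On each piece $\bigoplus$-summand the map is just $e_t$ followed by the (continuous) inclusion $B(\Gamma_t)\hookrightarrow B(\Gamma)$, so continuity and openness onto the image restricted to $K_t$ are clear. A clean way to organize this is to note that $e$ is a continuous injection from a compact space into a Hausdorff space, hence automatically a homeomorphism onto its image, so the only real content is continuity at $\infty$.

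I expect the main (mild) obstacle to be precisely the verification of continuity at the point $\infty$, i.e.\ translating "cofinitely many $K_t$ are excluded" in $K$ into an honest open condition in $B(\Gamma)$. This is handled by observing that if $F$ is a finite subset of $\Gamma$, then $\{x \in B(\Gamma) : |x_\gamma| < \delta \text{ for all } \gamma \in F\}$ is an open neighbourhood of $0$, and its $e$-preimage meets $K_t$ only for those finitely many $t$ with $\Gamma_t \cap F \neq \emptyset$, provided $\delta$ is small enough relative to the finitely many embeddings $e_{t}$ with $\Gamma_t\cap F\ne\emptyset$ — here one may have to shrink each of those finitely many $e_t(K_t)$ slightly away from the coordinates in $F$, or simply recall that $e_t(\,\cdot\,)$ takes values in $[-1,1]^{\Gamma_t}$ and argue directly. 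Once this is in place, the conclusion that $K$ embeds in $B(\Gamma)$, and is therefore uniform Eberlein compact, is immediate from the equivalent definition recalled above.
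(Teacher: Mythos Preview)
There is a genuine gap in your injectivity claim. You assert that ``injectivity is immediate from the disjointness of the blocks and the injectivity of each $e_t$'', but this is false in general: nothing prevents $0\in B(\Gamma_t)$ from lying in the image of $e_t$. If $e_t(p)=0$ for some $p\in K_t$, then under your map $e$ we get $e(p)=0=e(\infty)$; and if this happens for two distinct indices $t\neq s$, the corresponding points of $K_t$ and $K_s$ collide as well. So the map you define need not separate the summands from one another, nor from the point at infinity.

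The paper's proof repairs exactly this by enlarging the index set to $\Gamma=\bigl(\bigsqcup_{t\in T}\Gamma_t\bigr)\sqcup T$ and, for $x\in K_t$, setting the extra coordinate $\gamma=t$ equal to $\tfrac12$ (scaling the original coordinates by $\tfrac12$ so that the total sum stays $\leqslant 1$). This ``marker'' coordinate forces the images of distinct $K_t$'s to be disjoint and keeps all of them away from $0=h(\infty)$; it also makes continuity at $\infty$ transparent, since any basic neighbourhood of $0$ constrains only finitely many coordinates in $T$. Your plan becomes correct once you add such a marker; without it, the argument fails. (Your discussion of continuity at $\infty$ is also stated backwards---you want the preimage of a basic neighbourhood of $0$ to \emph{contain} all but finitely many $K_t$, not to avoid them---but that is easily fixed and is not the essential issue.)
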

\begin{proof}
For every $t\in T$, there is an embedding $h_t:K_t\rightarrow B(\Gamma_t)$, for some $\Gamma_t$. Let
$\Gamma$ be a disjoint union of $\{\Gamma_t:t\in T\}$ and $T$. Then $h:K\rightarrow B(\Gamma)$ defined by
$$h(x)_\gamma=\left\{
    \begin{array}{llll}
        \frac{1}{2} h_t(x)_\gamma  & \mbox{if } \gamma\in \Gamma_t \\
        0 & \mbox{if } \gamma\in \Gamma_s,\;s\neq t \\
        \frac{1}{2} & \mbox{if } \gamma=t \\
        0 & \mbox{if } \gamma \in T,\;\gamma\neq t \\
    \end{array}
\right.$$
for $x\in K_t$ and $h_\gamma(\infty)=0$ for every $\gamma\in \Gamma$ is the desired embedding.
\end{proof}

For an arbitrary set $\Gamma$ and a natural number $k\geqslant 2$, we denote by $\mathcal{P}^k(\Gamma)$ the
$\sigma$-field generated by sets of the form
$A_1\times\ldots\times A_k$ where $A_1,\ldots , A_k\subseteq\Gamma$.
Following \cite{ARP} we call a cardinal $\kappa$ \textit{Kunen} if $\PP(\kappa\times\kappa)=\PP^2(\kappa)$.
It is clear that for an arbitrary set $\Gamma$, the equality $\PP(\Gamma\times\Gamma)=\PP^2(\Gamma)$ depends only on
cardinality of $\Gamma$, so it holds if and only if $\lvert\Gamma \rvert$ is Kunen. We refer the reader to \cite{ARP} for
more on Kunen cardinals. Let us only mention here that the statement '$\cc$ is a Kunen cardinal' is independent of ZFC.

Finally, let us recall that elements of $\ell_\infty/c_0$ are of the form $[x]=\{y\in \ell_\infty:(x-y)\in c_0\}$, where $x\in \ell_\infty$.
For $[x]\in \ell_\infty/c_0$ we have $\lVert [x]\rVert=\limsup_n \lvert x(n)\rvert$.

\section{Proofs}

In this main section of our note we prove a strengthening of two theorems of Todor\v{c}ević from \cite{T}.
Theorems \ref{isometr} and \ref{isomorph} below are counterparts of Theorems 4.1 and 4.3 in \cite{T}, respectively.
Although the main idea in our proofs is the same as in \cite{T}, for the sake of completeness,
we decided to enclose here quite detailed reasonings.
For a binary relation $E\subseteq\RR^2$ and a natural number $n$ we denote by $K_n(E)$ the following space
$$K_n(E)=\{\chi_A\in \{0,1\}^{\RR}:A\in[\RR]^{\leqslant n},\; \forall a,b\in A\;\; a<b\Rightarrow (a,b)\in E\}.$$
It is a compact subspace of $\{0,1\}^\RR$ and moreover it is a uniform Eberlein compact space since it embeds into the space $\sigma_n(\RR)$.

As we will see, the following easy proposition plays a key role in the proof of the next theorem (see \cite{T}).
\begin{fact}\label{1}
Let $(X,\tau)$ be a separable, metrizable topological space.
Let $f:\RR\rightarrow X$ be an injection and $S\subseteq \RR^2$ be such that $f\times f[S]$ is a Borel subset
of $(f\times f[\RR^2],\tau\times\tau)$. Then $S\in\PP^2(\RR)$.
\end{fact}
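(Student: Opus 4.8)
The plan is to exploit the second countability of $X$ together with the injectivity of $f$, via a standard ``good sets'' argument. First, fix a countable base $\{U_n:n\in\omega\}$ for the topology $\tau$ on $X$. Then the rectangles $U_m\times U_n$ form a countable base for $X\times X$, and their traces on the subspace $Y:=f\times f[\RR^2]$ form a countable base for $Y$ equipped with the product topology $\tau\times\tau$. Since every open subset of the second countable space $Y$ is a countable union of such basic sets, the Borel $\sigma$-field of $Y$ coincides with the $\sigma$-field generated by $\{(U_m\times U_n)\cap Y:m,n\in\omega\}$.

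Next, consider the family $\mathcal{F}=\{B\subseteq Y:(f\times f)^{-1}[B]\in\PP^2(\RR)\}$. Because taking preimages commutes with complements and with countable unions, and $\PP^2(\RR)$ is a $\sigma$-field, $\mathcal{F}$ is a $\sigma$-field of subsets of $Y$. Moreover it contains every basic open set of $Y$: indeed $(f\times f)^{-1}[(U_m\times U_n)\cap Y]=f^{-1}[U_m]\times f^{-1}[U_n]$, which is a rectangle in $\RR^2$ and hence an element of $\PP^2(\RR)$. Combining this with the previous paragraph, $\mathcal{F}$ contains all Borel subsets of $Y$.

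Finally, by hypothesis $f\times f[S]$ is a Borel subset of $Y$, so it lies in $\mathcal{F}$, that is, $(f\times f)^{-1}[f\times f[S]]\in\PP^2(\RR)$. Since $f$ is an injection, so is $f\times f$, whence $(f\times f)^{-1}[f\times f[S]]=S$. Therefore $S\in\PP^2(\RR)$, as required.

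There is no real obstacle in this argument; the only two points that must not be overlooked are that second countability of $X$ (hence of $X\times X$, hence of $Y$) is precisely what lets one pass from a generating base to the full Borel $\sigma$-field of $Y$, and that the injectivity of $f$ is exactly what is needed to recover $S$ from its image $f\times f[S]$ under the preimage map.
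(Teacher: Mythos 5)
Your proof is correct, and it is exactly the standard argument behind this proposition (which the paper states without proof, deferring to Todor\v{c}evi\'c's paper): second countability reduces the Borel $\sigma$-field of $f\times f[\RR^2]$ to the $\sigma$-field generated by traces of basic rectangles, whose preimages under $f\times f$ are rectangles in $\RR^2$, and injectivity of $f$ recovers $S$ from its image. All the steps check out, including the two points you flag as essential.
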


\begin{thrm}\label{isometr}
Suppose that, for every uniform Eberlein compact space $K$ of weight at most $\cc$, the space $C(K)$ embeds isometrically into $\ell_\infty / c_0$.
Then $\mathfrak{c}$ is a Kunen cardinal.
\end{thrm}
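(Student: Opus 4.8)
The plan is to run a contrapositive-flavored argument: assume the isometric universality hypothesis and deduce $\PP(\RR\times\RR)=\PP^2(\RR)$, which is exactly the assertion that $\cc$ is Kunen (since the property depends only on the cardinality of the index set and $|\RR|=\cc$). So fix an arbitrary $S\subseteq\RR^2$; I must show $S\in\PP^2(\RR)$. The natural relation to feed into the machinery is a binary relation $E$ on $\RR$ built from $S$ — most likely $E = \{(a,b): a<b,\ (a,b)\in S\}$ (or some symmetrized variant), chosen so that membership of a pair in $S$ is encoded by which edges are present in the graph $E$. Then I would look at the uniform Eberlein compactum $K_2(E)$ (the case $n=2$ of the construction $K_n(E)$ already introduced), whose points are $\chi_\emptyset$, the singletons $\chi_{\{a\}}$, and the pairs $\chi_{\{a,b\}}$ with $(a,b)\in E$. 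Note $w(K_2(E))\le\cc$, so by hypothesis $C(K_2(E))$ embeds isometrically into $\ell_\infty/c_0 = C(\omega^*)$ where $\omega^*=\beta\omega\setminus\omega$.

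Next I would convert the isometric embedding $T:C(K_2(E))\hookrightarrow C(\omega^*)$ into a topological statement about $K_2(E)$. An isometric embedding of $C(L)$ into $C(M)$ is, by Banach–Stone-type considerations applied to the relevant quotients, witnessed by a continuous surjection from (a closed subspace of) $M$ onto $L$ — or at least $L$ is a continuous image of a closed subset of $M$. Concretely I expect to get a continuous surjection $\varphi: F \twoheadrightarrow K_2(E)$ from some closed $F\subseteq\omega^*$. The key combinatorial point is that the singletons $\{\chi_{\{a\}} : a\in\RR\}$ form a nice (discrete, or at least relatively discrete) copy of $\RR$ sitting inside $K_2(E)$, and this should pull back through $\varphi$ to give us an injection $f:\RR\to\omega^*$ and some control over the images. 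The payoff we are after is: the set $S$ (or $E$), transported via $f\times f$, should become a Borel — ideally even clopen or $F_\sigma$/$G_\delta$ — subset of a separable metrizable space. The separable metrizable space in question will come from restricting functions in $C(\omega^*)$ to a countable piece, or from the second-countability of appropriate pieces of the picture; $\omega^*$ itself is not metrizable, so the trick (and this is where I'd mine Todorčević's argument / Proposition \ref{1} most carefully) is to find a single element $g\in C(K_2(E))$ — e.g. built as a limit of coordinate functions, or one of the coordinate projections precomposed appropriately — whose image $T g\in C(\omega^*)=\ell_\infty/c_0$ is represented by a bounded sequence, and then read off the values $\limsup_n$ along the relevant points to detect whether $(a,b)\in E$. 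This realizes $f\times f[E]$ as a Borel subset of a metrizable space $(f\times f[\RR^2],\tau\times\tau)$, so Proposition \ref{1} gives $E\in\PP^2(\RR)$; since $S$ differs from $E$ by intersecting with the fixed set $\{(a,b):a<b\}\in\PP^2(\RR)$ and possibly unioning with a transpose, $S\in\PP^2(\RR)$ as well.

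The final step is to assemble: $S\in\PP^2(\RR)$ for every $S\subseteq\RR^2$ means $\PP(\RR\times\RR)=\PP^2(\RR)$, i.e. $\cc$ is Kunen, completing the proof.

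The hard part, I expect, will be the middle step — extracting from a merely isometric Banach-space embedding $C(K_2(E))\hookrightarrow\ell_\infty/c_0$ a genuinely topological/Borel gadget on $\RR$. One has to be careful: an isometric embedding of $C(K)$ into $C(M)$ does not immediately give a continuous surjection $M\to K$ (that is the Banach–Stone situation for \emph{surjective} isometries of the whole space, or for embeddings whose range is an ideal), so the argument must instead exploit the specific, very rigid structure of $C(\omega^*)$ and of $K_2(E)$ — in particular that the relevant "distances" between the vertex functions $\chi_{\{a\}}$, $\chi_{\{b\}}$, $\chi_{\{a,b\}}$ in $C(K_2(E))$ take only finitely many values depending precisely on whether $(a,b)\in E$, and that isometry preserves these, so that in $\ell_\infty/c_0$ one can use $\limsup$ computations along a suitably chosen sequence to recover the edge relation in a Borel way. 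Pinning down exactly which separable metrizable space plays the role of $(X,\tau)$ in Proposition \ref{1}, and checking that the recovered set is actually Borel there (not just analytic or worse), is the crux; everything else is bookkeeping with the definitions of $K_n(E)$, uniform Eberlein compacta, and Kunen cardinals already laid out in the Preliminaries.
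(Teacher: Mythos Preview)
Your overall plan is right, and you eventually gesture at the mechanism the paper uses, but the middle of your sketch takes a detour that is both unnecessary and unworkable. You propose to extract from the isometric embedding $T:C(K_2(E))\hookrightarrow C(\omega^*)$ a continuous surjection $\varphi:F\twoheadrightarrow K_2(E)$ from some closed $F\subseteq\omega^*$, and then pull the singletons $\chi_{\{a\}}$ back through $\varphi$. As you yourself concede, an isometric embedding of $C(K)$ into $C(M)$ does not in general produce such a surjection; Banach--Stone duality needs the isometry to be onto, or the range to be a subalgebra or ideal, and none of that is available here. This route should simply be dropped rather than hedged.

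The paper bypasses topology on $\omega^*$ entirely and works with norms alone. The injection required for Proposition~\ref{1} is built directly: set $\phi(r)=f_r\in C(K_2(E))$, the $r$-th coordinate function $f_r(x)=x(r)$. (Note that the $\chi_{\{a\}}$ are \emph{points} of $K_2(E)$, not elements of $C(K_2(E))$; the relevant functions are the $f_r$.) For $a<b$ one has $\lVert f_a+f_b\rVert=2$ when $(a,b)\in E$ (witnessed at $\chi_{\{a,b\}}$) and $\lVert f_a+f_b\rVert=1$ otherwise, so by isometry $\lVert Tf_a+Tf_b\rVert_{\ell_\infty/c_0}>1$ iff $(a,b)\in E$. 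Now choose any selector $\psi:\ell_\infty/c_0\to\ell_\infty$ and put $g=\psi\circ T\circ\phi:\RR\to\ell_\infty\subseteq\RR^\omega$; the separable metrizable space $(X,\tau)$ you were searching for is just $\RR^\omega$ with the product topology. Since $\lVert[x]\rVert_{\ell_\infty/c_0}=\limsup_n|x(n)|$, the edge relation becomes $(a,b)\in E\iff\limsup_n|g(a)(n)+g(b)(n)|>1$, and $\{(x,y)\in(\RR^\omega)^2:\limsup_n|x(n)+y(n)|>1\}$ is Borel there. Proposition~\ref{1} then yields $E\in\PP^2(\RR)$ immediately. No continuous surjection, no closed subset of $\omega^*$, and not ``a single element $g\in C(K_2(E))$'' but the whole family $\{f_r:r\in\RR\}$ is what makes the argument run.
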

\begin{proof}
Suppose the contrary and let $E$ witnesses that $\cc$ is not Kunen. Since $E\cap \{(a,b):a=b\}$ is in $\PP^2(\RR)$, one of the sets
$E_0=E\cap\{(a,b):a<b\}$ or $E_1= E\cap\{(a,b):a>b)\}$ is not in $\PP^2(\RR)$. By symmetry we can assume it is $E_0$.
Consider the space $K_2(E_0)$ which is uniform Eberlein compact. Now the proof goes as in \cite{T}.
We define an injection $\phi:\RR\rightarrow C(K_2(E_0))$ by $\phi(r)=f_r$, where $f_r(x)=x(r)$ for $x\in K_2(E_0)$. Let
$T:C(K_2)\rightarrow \ell_\infty / c_0$ be an isometry (which exists by our assumption) and let $\psi:\ell_\infty / c_0\rightarrow \ell_\infty$
be an arbitrary injection (a selector).
For $a<b$ we have
$$(a,b)\in E_0\text{  iff  } \chi_{\{a,b\}}\in K_2(E_0) \text{ iff } \lVert T(f_a)+T(f_b)\rVert=\lVert f_a+f_b \lVert>1\text{  iff  } $$
$$\limsup_n\lvert(\psi\circ T(f_a))(n)+(\psi\circ T(f_b))(n)\rvert>1$$
so putting $g=\psi\circ\ T\circ\phi$ we have that
$$g\times g[\RR^2]\cap \{(x,y)\in (\RR^\omega)^2:\limsup_n \lvert x(n)+y(n)\rvert>1\}$$
is a Borel subset of $g\times g[\RR^2]\subseteq\ell_\infty\times \ell_\infty$
in the topology inherited from $(\RR^\omega)^2$. Hence, by Proposition \ref{1},
$$(g\times g)^{-1}[\{(x,y)\in (\RR^\omega)^2:\limsup_n \lvert x(n)+y(n)\rvert>1\}]\in \PP^2(\RR).$$
Since
$$E_0=(g\times g)^{-1}[\{(x,y)\in (\RR^\omega)^2:\limsup_n \lvert x(n)+y(n)\rvert>1\}]\cap \{(x,y)\in \RR^2:x<y\}$$
and since $\{(x,y)\in \RR^2:x<y\}\in \PP^2(\RR)$ we conclude that $E_0\in\PP^2(\RR)$, a contradiction.

\end{proof}

Notice that the result follows also under a weaker assumption: for every \textit{scattered} uniform Eberlein compactum (of height $3$)
its space of continuous functions embeds isometrically into $\ell_\infty / c_0$.

To prove the result about isomorphic embeddings we need the following modification of Proposition \ref{1}.
Here $f^n$ denotes a function which is the product
$f\times \ldots \times f$ of $n$ many copies of $f$.
\begin{fact}\label{2}
 Let $(X,\tau)$ be a separable, metrizable topological space.
Let $f:\RR\rightarrow X$ be an injection and $A,B\subseteq \RR^n$ for some integer $n\geqslant 2$.
If $f^n[A]$ is separated from $f^n[B]$ by a Borel set in $X^n$, then $A$ can be separated from $B$ by a member of
$\PP^n(\RR)$.
\end{fact}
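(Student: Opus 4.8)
The plan is to imitate the proof of Proposition \ref{1}, upgrading from a single Borel set to a pair of sets separated by a Borel set. Recall that Proposition \ref{1} is presumably proved by observing that $f \times f$ is a Borel isomorphism from $\RR^2$ (with its standard Borel structure, which coincides with the one generated by $\PP^2(\RR)$ on measurable rectangles... more precisely: the product $\sigma$-field) onto $(f\times f)[\RR^2]$ with the subspace Borel structure, and then pulling back. For the $n$-fold version I would first record the analogous structural fact: for an injection $f:\RR\to X$, the map $f^n:\RR^n\to X^n$ carries the $\sigma$-field $\PP^n(\RR)$ into the Borel $\sigma$-field of $X^n$, because each rectangle $A_1\times\cdots\times A_n$ with $A_i\subseteq\RR$ has image $f[A_1]\times\cdots\times f[A_n]$, and preimages under $f^n$ of products of subsets of $X$ are products of subsets of $\RR$; since $\PP^n(\RR)$ is generated by the rectangles and $f^n$ is injective (so taking preimages commutes with all Boolean operations and countable unions), the preimage $(f^n)^{-1}[B]$ of any Borel $B\subseteq X^n$ lies in $\PP^n(\RR)$.

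The key point specific to \emph{separation} is then immediate. Suppose $f^n[A]$ and $f^n[B]$ are separated by a Borel set $D\subseteq X^n$, i.e. $f^n[A]\subseteq D$ and $f^n[B]\cap D=\emptyset$. Put $C=(f^n)^{-1}[D]$. By the structural observation, $C\in\PP^n(\RR)$. Since $f$ is injective, $f^n$ is injective, so $f^n[A]\subseteq D$ gives $A\subseteq (f^n)^{-1}[D]=C$, and $f^n[B]\cap D=\emptyset$ gives $B\cap C=\emptyset$. Hence $C\in\PP^n(\RR)$ separates $A$ from $B$, which is exactly the conclusion.

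The only subtlety — and the step I would be most careful about — is the claim that separability and metrizability of $(X,\tau)$ are used exactly where they are needed, namely to guarantee that the Borel $\sigma$-field of $X^n$ is generated by products $U_1\times\cdots\times U_n$ of Borel (indeed open) subsets of $X$, so that the ``generated by rectangles'' description of the Borel structure on $X^n$ is valid and the preimage computation goes through; for general topological spaces the product-Borel $\sigma$-field can be strictly smaller than the Borel $\sigma$-field of the product, so this hypothesis is genuinely needed, just as in Proposition \ref{1}. With that observed, the argument is a routine diagram chase, so I would present it compactly: state the structural lemma about $f^n$ and $\PP^n(\RR)$ (citing the proof of Proposition \ref{1} for the case $n=2$, or simply redoing the one-line generation argument), then deduce the separation statement in two sentences.
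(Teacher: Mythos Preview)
The paper does not actually supply a proof of this proposition; it is stated as an easy modification of Proposition~\ref{1}, which itself is only cited from \cite{T}. Your argument is correct and is precisely the expected one: since $(X,\tau)$ is separable metrizable, the Borel $\sigma$-field of $X^n$ coincides with the $n$-fold product $\sigma$-field, hence is generated by measurable rectangles; preimages of rectangles under $f^n$ are rectangles in $\RR^n$, so the family of $D\subseteq X^n$ with $(f^n)^{-1}[D]\in\PP^n(\RR)$ is a $\sigma$-field containing all rectangles and therefore all Borel sets; finally, if $D$ is Borel and separates $f^n[A]$ from $f^n[B]$, then $(f^n)^{-1}[D]\in\PP^n(\RR)$ separates $A$ from $B$.

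One minor wording issue: your opening parenthetical that the standard Borel structure on $\RR^2$ ``coincides with the one generated by $\PP^2(\RR)$'' is not right as written --- $\PP^2(\RR)$ is generated by \emph{arbitrary} rectangles $A_1\times A_2$ with $A_i\subseteq\RR$, so it is in general much larger than the Borel $\sigma$-field on $\RR^2$ (indeed, whether it equals the full power set is exactly the Kunen-cardinal question). Similarly, the phrase ``$f^n$ carries $\PP^n(\RR)$ into the Borel $\sigma$-field of $X^n$'' suggests a forward-image statement that is neither true nor needed. Your actual argument, however, works entirely with preimages and is fine; just clean up that introductory sentence when you write it up.
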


Following \cite{T}, for any binary relation $E\subseteq\RR^2$ and integer $n\geqslant 2$, we denote by $E^{[n]}$ the following set
$$E^{[n]}=\{(x_1,\ldots x_n)\in\RR^n:\forall i<j\; x_i\neq x_j \text{ and } (x_i,x_j)\in E\}.$$
In particular, $<^{[n]}$ is the set of all strictly increasing sequences of reals of length $n$.
The complementary relation to $E$ we will denote by $E^c$.

\begin{thrm}\label{isomorph}
Suppose that, for every uniform Eberlein compact space $K$ of weight at most $\cc$, the space $C(K)$ embeds isomorphically into $\ell_\infty / c_0$.
Then for every binary relation $E\subseteq \RR^2$ and for all but finitely many positive integers n,
the set $E^{[n]}$ can be separated from $(E^c)^{[n]}$ by a member of
$\PP^n(\RR)$
\end{thrm}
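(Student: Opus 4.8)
The plan is to mimic the proof of Theorem~\ref{isometr}, replacing the isometry by an isomorphism and the single compactum $K_2(E_0)$ by the family $K_n(E)$ for all $n$. First I would suppose the conclusion fails, so there is a binary relation $E\subseteq\RR^2$ and infinitely many $n$ for which $E^{[n]}$ cannot be separated from $(E^c)^{[n]}$ by a member of $\PP^n(\RR)$. Fix such an $n$; I would work with the uniform Eberlein compactum $K_n(E)$ and the injection $\phi\colon\RR\to C(K_n(E))$, $\phi(r)=f_r$, where $f_r(x)=x(r)$. Let $T\colon C(K_n(E))\to\ell_\infty/c_0$ be an isomorphic embedding and $\psi$ a selector as before; put $g=\psi\circ T\circ\phi$ and $M=\|T\|\cdot\|T^{-1}\|$ the distortion.

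The key geometric observation is that membership of $\chi_A\in K_n(E)$ for an $n$-element set $A=\{a_1,\dots,a_n\}$ is detected by the norm of $\sum_i f_{a_i}$ in $C(K_n(E))$: if all pairs from $A$ lie in $E$ then $\chi_A\in K_n(E)$ and $\|\sum_i f_{a_i}\|=n$, whereas if some pair from $A$ lies in $E^c$ then on every point $x\in K_n(E)$ at most $n-1$ of the coordinates $x(a_i)$ equal $1$, so $\|\sum_i f_{a_i}\|\le n-1$. Hence for a strictly increasing tuple $(a_1,\dots,a_n)$ one has: $(a_1,\dots,a_n)\in E^{[n]}$ forces $\|T(\sum_i f_{a_i})\|\ge n/M$, while $(a_1,\dots,a_n)\in (E^c)^{[n]}$ forces $\|T(\sum_i f_{a_i})\|\le (n-1)M$. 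For $n$ large enough (namely $n/M>(n-1)M$, i.e.\ $n>M^2/(M^2-1)$, which holds for all but finitely many $n$) these two estimates are incompatible, so the Borel set
\[
U=\{(x_1,\dots,x_n)\in(\RR^\omega)^n:\limsup_k|x_1(k)+\dots+x_n(k)|>(n-1)M\}
\]
in $X^n$ with $X=\RR^\omega$ (a separable metrizable space) separates $g^n[E^{[n]}\cap{<}^{[n]}]$ from $g^n[(E^c)^{[n]}\cap{<}^{[n]}]$. By Proposition~\ref{2} applied to the injection $g$, the sets $E^{[n]}\cap{<}^{[n]}$ and $(E^c)^{[n]}\cap{<}^{[n]}$ are separated by some $D\in\PP^n(\RR)$.

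It remains to pass from the increasing-tuple version to the full version. The symmetrization trick from \cite{T} does this: since $E^{[n]}$ and $(E^c)^{[n]}$ are invariant under no permutation in general, one writes $E^{[n]}=\bigcup_{\pi\in S_n}\pi(E^{[n]}\cap{<}^{[n]})$ and likewise for $(E^c)^{[n]}$, where $\pi$ permutes coordinates; each coordinate permutation maps $\PP^n(\RR)$ to itself, and a finite boolean combination of the separating sets $\pi(D)$ (for $\pi\in S_n$) yields a single member of $\PP^n(\RR)$ separating $E^{[n]}$ from $(E^c)^{[n]}$ (using that on the complement of the diagonal the $S_n$-translates of ${<}^{[n]}$ partition $\RR^n$, and the diagonal itself lies in $\PP^n(\RR)$). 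This contradicts the choice of $n$, and since only finitely many $n$ were excluded by the inequality $n>M^2/(M^2-1)$, the theorem follows. The main obstacle I expect is bookkeeping in this last symmetrization step — making sure the finitely many coordinate permutations interact correctly with the diagonal and that the resulting boolean combination indeed separates the two sets — but there is no essential difficulty, as each piece is a routine manipulation within the $\sigma$-field $\PP^n(\RR)$.
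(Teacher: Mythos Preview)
The central gap is that you apply the hypothesis to each $K_n(E)$ separately, so the isomorphism $T$---and hence the distortion $M=\lVert T\rVert\cdot\lVert T^{-1}\rVert$---depends on $n$. Your argument by contradiction then collapses: for a fixed ``bad'' $n$ you obtain some $M=M(n)$ over which you have no control, and there is no reason the separating inequality should hold for that particular pair $(n,M(n))$. The paper repairs this by passing to a single uniform Eberlein compactum $K=\bigoplus_{n\ge1}K_n(E_0)\cup\{\infty\}$ (Proposition~\ref{sumakompaktow}), obtaining one embedding $T$ and one integer $k>\lVert T\rVert\cdot\lVert T^{-1}\rVert$; for each $n\ge k$ the functions $f_r$ are then defined to be supported on the summand $K_n(E_0)$ alone, so the same $k$ governs every $n$.

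There is a second, independent gap in the norm estimate. For an increasing tuple $(a_1,\dots,a_n)\in(E^c)^{[n]}\cap{<}^{[n]}$ every pair $(a_i,a_j)$ with $i<j$ lies in $E^c$, so no two of the $a_i$ can simultaneously lie in the support of a point of $K_n(E)$; hence $\lVert f_{a_1}+\dots+f_{a_n}\rVert=1$, not merely $\le n-1$. Your weaker bound makes the numerics hopeless even for a fixed $M$: solving $n/M>(n-1)M$ gives $n<M^2/(M^2-1)$, the reverse of what you wrote, and this holds only for finitely many $n$. With the correct bound the comparison is $n$ versus $1$, and the threshold becomes simply $n\ge k$, which is exactly what the single-space construction above supplies. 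Finally, the passage from ${<}^{[n]}$ to all of $\RR^n$ is less routine than you suggest, since $E^{[n]}$ is defined via the index order while $K_n(E)$ detects only the real order and $E$ need not be symmetric; the paper handles this by treating $E_0=E\cap{<}$ and $E_1=E\cap{>}$ separately and then, for $n$ large enough that every $n$-tuple contains a monotone subsequence of length $k$, combining via coordinate projections onto such subsequences.
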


\begin{proof}
Let $E\subseteq\RR^2$ be arbitrary. First we will prove the theorem for the relation
$$E_0=(E\cap <)=\{(x,y)\in \RR^2:x<y\text{ and }(x,y)\in E\}.$$
Since $E_0^{[n]}\subseteq <^{[n]}$ and $<^{[n]}\in \PP^n(\RR)$, to get a desired separation between $E_0^{[n]}$ and $(E_0^c)^{[n]}$,
it is enough to separate $E_0^{[n]}$ from $(E_0^c)^{[n]}\cap <^{[n]}$ by a member of $\PP^n(\RR)$. To this end let us consider
$$K=\bigoplus_{n=1}^{\infty}K_n(E_0)\cup\{\infty\}$$
which is a one point compactification of a discrete sum of spaces $K_n(E_0)$. By Proposition \ref{sumakompaktow},
$K$ is a uniform Eberlein compactum.

Let
$T:C(K)\rightarrow l_{\infty}/c_0$ be an isomorphism (which exists by the assumption). Then there exists a positive integer $k$
such that $k>\lVert T\rVert\lVert T^{-1}\rVert$.
Let $n\geqslant k$ be arbitrary.

As in the previous proof we define an injection $\phi:\RR\rightarrow C(K)$ by $\phi(r)=f_r$ where $f_r:K\rightarrow \RR$ is defined by
$$f_r(x)=\left\{
  \begin{array}{ll}
    x(r) & \mbox{if } x\in K_n(E_0) \\
    0 & \mbox{if } x\in K\setminus K_n(E_0)
\end{array}
\right.$$

Now, if
$(x_1,\ldots ,x_n) \in E_0^{[n]}$ then $\chi_{\{x_1,\ldots ,x_n\}}\in K_n(E_0)\subseteq K$ so $\lVert f_{x_1}+\ldots +f_{x_n}\rVert=n$.
Hence
$$n=\lVert f_{x_1}+\ldots +f_{x_n}\rVert=\lVert T^{-1}T(f_{x_1}+\ldots +f_{x_n})\rVert\leqslant \lVert T^{-1}\rVert
\lVert T(f_{x_1}+\ldots +f_{x_n})\rVert \text{ ,}$$
so
$$\lVert T(f_{x_1}+\ldots +f_{x_n})\rVert\geqslant\frac{n}{\lVert T^{-1}\rVert}\geqslant\frac{k}{\lVert T^{-1}\rVert}>\lVert T\rVert.$$
If $(x_1,\ldots ,x_n) \in (E_0^c)^{[n]}\cap <^{[n]}$ then, for any $x\in K$, at most one function $f_{x_i}$ has value $1$ at $x$, so
$\lVert f_{x_1}+\ldots +f_{x_n}\rVert=1$.
Hence
$$\lVert T(f_{x_1}+\ldots +f_{x_n})\rVert\leqslant \lVert T\rVert\lVert f_{x_1}+\ldots +f_{x_n}\rVert=\lVert T\rVert.$$
For an arbitrary injection $\psi:\ell_\infty / c_0\rightarrow \ell_\infty$
we put $g=\psi\circ T\circ\phi$ and conclude that
the set
$$\{(x_1,\ldots , x_n)\in (\ell_\infty)^n:\limsup_m\lvert x_1(m)+\ldots +x_n(m)\rvert >\lVert T\rVert\}$$
separates $g^n[E_0^{[n]}]$ from $g^n[(E_0^c)^{[n]}\cap <^{[n]}]$ (recall that by $g^n$ we mean the product of $n$ many copies of $g$).
Since it is Borel in the topology inherited from $(\RR^{\omega})^n$ the result follows from Proposition \ref{2}.

By symmetry the above argument works also for the relation
$$E_1=(E\cap >)=\{(x,y)\in \RR^2:x>y\text{ and }(x,y)\in E\}.$$
We only need to change $<$ for $>$ in the definition of $K_n(E)$.

To get the result in the full generality, observe that if $n$ is sufficiently large then any sequence $(x_1,\ldots,x_n)\in \RR^n$ has a subsequence
of length $k$ which is strictly increasing or strictly decreasing. That is, if for any $A\in [n]^k$ (we identify here $n$ with $\{1,\ldots ,n\}$)
we put
$$F_0(A)=\{(x_1,\ldots , x_n)\in \RR^n: \forall i,j\in A \;\; i<j \Rightarrow x_i<x_j\} \text{ and}$$
$$F_1(A)=\{(x_1,\ldots , x_n)\in \RR^n: \forall i,j\in A \;\; i<j \Rightarrow x_i>x_j\},$$
then
$$\RR^n=\bigcup_{A\in [n]^k}(F_0(A)\cup F_1(A)).$$
For $A\in [n]^k$ and $i\in\{0,1\}$ let $G_i(A)=E^{[n]}\cap F_i(A)$ and let $\pi_A:\RR^n\rightarrow \RR^A$
be the projection.

It is clear that we can identify $\pi_A(G_i(A))$ with $E_i^{[k]}$, so from the first part of the proof follows the existence of a set
$S_i(A)\in \PP^k(\RR)$ which separates $\pi_A(G_i(A))$ from $\pi_A((E_i^c)^{[n]})$.
It is not difficult to check that the set
$$\bigcup_{A\in [n]^k, i\in\{0,1\}}\pi_A^{-1}(S_i(A))$$
which is clearly in $\PP^n(\RR)$ separates $E^{[n]}$ from $(E^c)^{[n]}$.

\end{proof}

We should mention here, that consistently the thesis of \ref{isomorph} may not hold i.e. consistently there is a set
$E\subseteq\RR^2$ such that, for no natural number $n$, the set $E^{[n]}$ can not be separated from $(E^c)^{[n]}$
by a member of $\PP^n(\RR)$ (see \cite{T}, Remark 3.5). Thus, from what we proved it follows that consistently there exists
a uniform Eberlein compactum $K$ such that $C(K)$ is not isomorphically embeddable into $\ell_\infty/c_0$.

We shall show now that consistently there is a space of continuous functions on a uniform Eberlein compactum which
distinguishes isometric from isomorphic embeddings into $\ell_\infty/c_0$.

\begin{thrm}
If $\cc$ is not a Kunen cardinal then there exists a uniform Eberlein compact space $K$ such that the space $C(K)$
embeds isomorphically and does not embed isometrically into $\ell_\infty / c_0$.
\end{thrm}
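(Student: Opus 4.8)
The plan is to take for $K$ the space $K_2(E_0)$ that already appears in the proof of Theorem \ref{isometr}. Since $\cc$ is not a Kunen cardinal, exactly as at the beginning of that proof we fix a binary relation $E_0\subseteq\{(a,b):a<b\}$ with $E_0\notin\PP^2(\RR)$; then $K_2(E_0)$ is a uniform Eberlein compactum of weight at most $\cc$. The reasoning in the proof of Theorem \ref{isometr} is carried out for this one space, so it already settles the isometric side: if $C(K_2(E_0))$ embedded isometrically into $\ell_\infty/c_0$, then passing to $g=\psi\circ T\circ\phi$ and applying Proposition \ref{1} would yield $E_0\in\PP^2(\RR)$, a contradiction. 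Hence $C(K_2(E_0))$ does not embed isometrically, and the task reduces to producing an \emph{isomorphic} embedding of $C(K_2(E_0))$ into $\ell_\infty/c_0$.

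For that I would first record a ZFC fact of independent interest: $c_0(\cc)$ embeds isometrically into $\ell_\infty/c_0$. Fix an almost disjoint family $\{A_\gamma:\gamma<\cc\}$ of infinite subsets of $\omega$ (for instance the branches of the tree $2^{<\omega}$) and send the $\gamma$-th unit vector to $[\chi_{A_\gamma}]$. By almost disjointness, for every finitely supported scalar family $(c_\gamma)$ one has $\lVert\sum_\gamma c_\gamma[\chi_{A_\gamma}]\rVert=\limsup_n\lvert\sum_\gamma c_\gamma\chi_{A_\gamma}(n)\rvert=\max_\gamma\lvert c_\gamma\rvert$, so this map is an isometry on $c_{00}(\cc)$ and extends to an isometric embedding of $c_0(\cc)$ into $\ell_\infty/c_0$.

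Next I would exploit the structure of $K_2(E_0)$. Being a closed subspace of $\sigma_2(\RR)$, it is a scattered uniform Eberlein (in fact strong Eberlein) compactum of finite Cantor--Bendixson height: its isolated points are the $\chi_{\{a,b\}}$ with $(a,b)\in E_0$, its first derived set lies inside the one-point compactification of the discrete set $\{\chi_{\{a\}}:a\in\RR\}$, and its second derived set is $\{\chi_\emptyset\}$, so the height is at most $3$. For such compacta $C(K)$ is isomorphic to $c_0(\Gamma)$ for a suitable index set $\Gamma$ with $\lvert\Gamma\rvert\leqslant\cc$. Combining the three ingredients, $C(K_2(E_0))\cong c_0(\Gamma)$ embeds isometrically into $c_0(\cc)$ (since $\lvert\Gamma\rvert\leqslant\cc$), which embeds isometrically into $\ell_\infty/c_0$; thus $C(K_2(E_0))$ embeds isomorphically but not isometrically into $\ell_\infty/c_0$, and $K=K_2(E_0)$ witnesses the theorem.

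The step I expect to be the main obstacle is the assertion that $C(K_2(E_0))$ is isomorphic to some $c_0(\Gamma)$. The quickest route is to quote the structure theory of $C(K)$ for $K$ a scattered Eberlein compactum of finite height. For a self-contained argument the natural approach is to peel off the Cantor--Bendixson levels one at a time: restricting a continuous function to the derived set gives an element of a copy of $c_0(\cc)$, while the residual part is almost supported on the isolated points; iterating this finitely often (the height being finite) presents $C(K_2(E_0))$ as an amalgam of finitely many $c_0$-pieces, and the bookkeeping that upgrades this to an honest isomorphism with a single $c_0(\Gamma)$ — in particular controlling the way the residuals fail to vanish along the ``wrong'' limit directions of the compactum — is where the real work sits.
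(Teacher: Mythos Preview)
Your proposal is correct and follows exactly the paper's approach: both take $K=K_2(E_0)$, invoke the proof of Theorem~\ref{isometr} for the non-isometric part, and use the isomorphism $C(K_2(E_0))\cong c_0(\cc)$ for the isomorphic embedding. The only difference is that the paper dispatches your ``main obstacle'' in one line by citing \cite{M}, Theorem~1.1, for the fact that $C(K_2(E))$ is isomorphic to $c_0(\cc)$, whereas you sketch the underlying structure-theoretic argument (scattered Eberlein compactum of finite height) and the almost-disjoint-family embedding of $c_0(\cc)$ into $\ell_\infty/c_0$.
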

\begin{proof}
 Since $\cc$ is not Kunen, there exists $E\subseteq \RR^2$ which is not in the $\sigma$-field $\PP^2(\RR)$.
 Without loss of generality, we can assume that $E\subseteq
 \{(a,b)\in\RR^2: a<b\}$, see the proof of Theorem \ref{isometr}.
 From this proof
it follows that $K=K_2(E)$ is a uniform Eberlein compact space
such that $C(K)$ does not embed isometrically into $\ell_\infty /
c_0$. On the other hand, $C(K)=C(K_2(E))$ always embeds
isomorphically into $\ell_\infty / c_0$, since it is isomorphic to
$c_0(\cc)$ (see \cite{M}, Theorem 1.1).
\end{proof}

\section{Remarks}
In this section we discuss shortly universality properties of $\ell_\infty/c_0$ for other two classes of compacta:
spaces which are continuous images of $\sigma_1(\cc)^\omega$ and so called AD-compacta.

It was shown by Y.\ Benyamini, M.E.\ Rudin and M.\ Wage in
\cite{BRW} that a space $K$ is uniform Eberlein compact of weight
$\leqslant\kappa$ if and only if it is a continuous image of a closed
subset of $\sigma_1(\kappa)^\omega$ (which is homeomorphic to
$A(\kappa)^\omega$, where $A(\kappa)$ denotes the one point
compactification of a discrete space of size $\kappa$). In the
same paper the authors asked whether we can replace a closed
subset of $\sigma_1(\kappa)^\omega$ by $\sigma_1(\kappa)^\omega$
itself. This question was answered in the negative by M.\ Bell in
\cite{Bell'96}. He considered a space homeomorphic to the space
$$ \{\chi_A\in \{0,1\}^{\omega_1}: A\in [\omega_1]^{\leqslant 2} \;\forall a,b \in A\; a<b \text{ iff } a\prec b\},$$
where $\prec$ is a well ordering on $\omega_1$ (\cite{Bell'96}, Example 3.1)
and proved that it is not a continuous image of $\sigma_1(\omega_1)^\omega$ (see Example \ref{example} below for a different space of this kind).
It is not difficult to see
that the above space is a continuous image of
the space $K_2(\prec)$, where $\prec$ is a well ordering on $\RR$. It was pointed out by S.\ Todor\v{c}evi\'c that consistently (in a model
obtained by adding more than continuum many reals) the set
$$E=\{(a,b)\in [0,1]^2: a<b \text{ iff } a\prec b\},$$
where $\prec$ is a well ordering on the interval $[0,1]$ is not in
the $\sigma$-field $\PP^2(\RR)$ (see \cite{T}, Remark 3.5). Thus
from Theorem \ref{isometr} it follows that (in this model) the
space $C(K_2(\prec))$ does not embed isometrically into
$\ell_\infty/ c_0$.

It turns out that $\ell_\infty/c_0$ is isometrically universal for the class of continuous images of $\sigma_1(\cc)^\omega$.
So we can distinguish the class of uniform Eberlein compacta of weight at most $\cc$ from the class of continuous images of
$\sigma_1(\cc)^\omega$ in terms of
universality properties of $\ell_\infty/c_0$.
\begin{fact}
If $K$ is a continuous image of $\sigma_1(\cc)^\omega$, then the space $C(K)$ embeds isometrically into $\ell_\infty /c_0$
\end{fact}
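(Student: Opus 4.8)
The key observation is that Parovičenko's theorem gives us the conclusion whenever the compactum has weight $\leqslant\aleph_1$, but here the spaces may have weight $\cc$, so the naive approach fails. Instead, the plan is to exhibit a concrete isometric embedding tailored to the structure of $\sigma_1(\cc)^\omega$. First I would recall that if $K$ is a continuous image of $L$, then $C(K)$ embeds isometrically into $C(L)$ via composition with the surjection; hence it suffices to prove the claim for $L=\sigma_1(\cc)^\omega$ itself, or equivalently (by the parenthetical remark in the excerpt) for $A(\cc)^\omega$, where $A(\cc)$ is the one-point compactification of a discrete space of size $\cc$. So the real task is: construct an isometric embedding of $C(A(\cc)^\omega)$ into $\ell_\infty/c_0 \cong C(\omega^*)$.

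The natural route is to realize $A(\cc)^\omega$ as a nice quotient of $\omega^* = \beta\omega\setminus\omega$, or rather to build the embedding $C(A(\cc)^\omega)\hookrightarrow \ell_\infty/c_0$ directly using an explicit countable ``coordinate structure.'' The point of using $\ell_\infty/c_0$ (rather than just $\ell_\infty$) is that a function in $\ell_\infty$ only needs to represent a given continuous function on $A(\cc)^\omega$ \emph{asymptotically} along $\omega$, and the norm is a $\limsup$. Concretely, I would enumerate a countable dense-in-itself base of ``basic clopen patterns'' for $A(\cc)^\omega$: a point of $A(\cc)^\omega$ is a sequence $(x_k)_{k<\omega}$ with each $x_k \in \cc\cup\{\infty\}$, and all but finitely many coordinates equal to any prescribed value can be ignored up to small perturbation because $A(\cc)$ is the one-point compactification. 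The idea is that $C(A(\cc)^\omega)$ is generated (as a closed algebra, or at least its unit ball is approximated) by functions depending on only finitely many coordinates, each coordinate function on $A(\cc)$ being a finitely-supported perturbation of a constant — and such functions live isometrically inside $\ell_\infty/c_0$ by a diagonal/back-and-forth listing of the finitely many ``active'' values along a partition of $\omega$ into infinitely many infinite blocks.

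The main obstacle I anticipate is the verification that the constructed map is genuinely \emph{isometric}, not merely bounded: one must check both $\|Tf\|\leqslant\|f\|$ (easy, by construction using a $\limsup$ that never exceeds the sup of $f$) and $\|Tf\|\geqslant\|f\|$, which requires that for every $\varepsilon>0$ the value $\|f\|-\varepsilon$ is actually attained by the representing sequence along a subsequence of $\omega$ — this is where the ``one-point compactification'' structure of $\sigma_1$ is essential, since it lets a single coordinate of $A(\cc)$ carry cofinitely many of its values to $\infty$, so a point witnessing $\|f\|$ can be approximated by points sitting in the appropriate basic clopen sets indexed by our countable dense family. A secondary technical point is passing from finitely-supported-coordinate functions to all of $C(A(\cc)^\omega)$ by a density argument, using that an isometry defined on a dense subspace extends uniquely to an isometry of the completion. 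I would organize the proof as: (1) reduce to $L=A(\cc)^\omega$; (2) fix a partition $\omega=\bigsqcup_{k<\omega} N_k$ into infinite sets and a countable family of ``test points'' of $L$ that is dense in a suitable sense; (3) define $T$ on the dense subalgebra of functions depending on finitely many coordinates by listing their values along the $N_k$'s; (4) check the two norm inequalities; (5) extend by density and compose with the quotient map to finish.
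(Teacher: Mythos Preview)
Your first reduction is correct and matches the paper: if $K$ is a continuous image of $L$, then $C(K)$ sits isometrically inside $C(L)$, so it suffices to treat $L=A(\cc)^\omega$. You even name the right idea in passing --- ``realize $A(\cc)^\omega$ as a nice quotient of $\omega^*$'' --- and then abandon it. That sentence \emph{is} the paper's proof: the authors simply cite the Bell--Shapiro--Simon theorem that $\sigma_1(\cc)^\omega$ is a continuous image of $\beta\omega\setminus\omega$, and since $C(\beta\omega\setminus\omega)\cong\ell_\infty/c_0$, they are done in two lines.

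The constructive route you pivot to has a genuine gap. Your plan relies on ``a countable dense-in-itself base of basic clopen patterns'' and ``a countable family of test points of $L$ that is dense in a suitable sense''. But $A(\cc)$ already has $\cc$ isolated points, so the basic clopen sets of $A(\cc)^\omega$ form a family of size $\cc$, and $A(\cc)^\omega$ is not separable. No countable family of test points can witness the lower bound $\lVert Tf\rVert\geqslant\lVert f\rVert$ for all $f$: for instance, the characteristic functions $\chi_{\{\alpha\}}$ on a single factor $A(\cc)$, for $\alpha<\cc$, are $\cc$ many functions of norm $1$ at pairwise distance $1$, and a countable listing along a partition of $\omega$ cannot separate them all in $\ell_\infty/c_0$.

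What any direct construction must use instead is a family of subsets of $\omega$ of size $\cc$ with strong combinatorial independence (an independent family, or an almost disjoint family together with a product construction). Encoding each of the $\cc$ isolated points of $A(\cc)$ by a member of such a family is exactly how one shows that $A(\cc)$, and then $A(\cc)^\omega$, is a continuous image of $\omega^*$ --- i.e., it is precisely the content of the Bell--Shapiro--Simon result. So the ``elementary'' plan, once repaired, collapses back into the citation the paper gives.
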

\begin{proof}
 $\sigma_1(\cc)^\omega$ is a continuous image of $\beta\omega\setminus \omega$ (see \cite{BSS}, Theorem 2.5 and Example 5.3)
and since $C(\beta\omega\setminus \omega)$ is
isometric to $\ell_\infty /c_0$, the result follows.
\end{proof}
\begin{ex}\label{example}
A simple counterexample to the question of Y.\ Benyamini, M.E.\
Rudin and M.\ Wage mentioned at the beginning of this section is
the Alexandroff double of the Cantor set $C\subset [0,1]$, which
we will denote by $D$. Since, by a result of J.\ Gerlits from
\cite{G}, the character and the weight coincide for continuous
images of $\sigma_1(\kappa)^\omega$, we conclude that $D$ is not
such an image. It is however a uniform Eberlein compactum. Indeed,
for $x\in C$ and $i=0,1$, let $\Gamma=C\cup\{2\}$ and $f_{x,i}:
\Gamma\to [0,1]$ be defined by the following formula:

$$f_{x,i}(\gamma)=\left\{
    \begin{array}{llll}
        x & \mbox{if } \gamma=2  \\
        0 & \mbox{if } \gamma\in C,\;\gamma\ne x \\
        1 & \mbox{if } \gamma=x,\; i=1 \\
        0 & \mbox{if } \gamma=x,\; i=0
    \end{array}
\right.$$

One can easily verify that the space $\{f_{x,i}: x\in C, \;
i=0,1\}$ considered as a subspace of the product $[0,1]^\Gamma$ is
homeomorphic to the space $D$ (the functions $f_{x,0}$ correspond
to nonisolated points of $D$ and the functions $f_{x,1}$
correspond to the isolated ones). Thus $D$ is a uniform Eberlein
compactum.

The second author was informed about this example by M.\ Bell, who unfortunately had never published it.
As far as we know, in this context, it has never appeared in the literature before.

We should note that the space $D$ was used, in a different
context, by G.~Plebanek in \cite{Plebanek} to distinguish the
class of Eberlein compacta from the class of AD-compacta (the
definition is given below). He noted that $D$ is not a continuous
image of $\sigma_1(\kappa)^\omega$ and is an Eberlein compactum.
He was not aware however of a question of Y.\ Benyamini, M.E.\
Rudin and M.\ Wage.
\end{ex}
\medskip

Another interesting class of compacta is the class of AD-compacta (see \cite{Plebanek}, \cite{bell}).
Given a nonempty set $X$ we say that a family $\mathcal{A}$ of its subsets is \textit{adequate}
if it satisfies the following two conditions:
\begin{itemize}
 \item[(i)] if $A\in \mathcal{A}$ and $B\subseteq A$ then $B\in \mathcal{A}$ and
 \item[(ii)] if $A\subseteq X$ and every finite subset of $A$ is in $\mathcal{A}$, then $A\in \mathcal{A}$.
\end{itemize}
Of course, we can associate in a natural way (identifying a set with its characteristic function) a family $\mathcal{A}$ with a space
$K(\mathcal{A})\subseteq\{0,1\}^X$. It is not difficult to check that $K(\mathcal{A})$ is a compact subspace of $\{0,1\}^X$, provided
$\mathcal{A}$ is adequate.
We say that a compact space $K$ is \textit{adequate} if $K$ is homeomorphic to $K(\mathcal{A})$, for some adequate family $\mathcal{A}$.
We say that a space $K$ is \textit{AD-compact} if it is a continuous image of an adequate compactum.
We refer to \cite{Plebanek}, \cite{bell} for the basic properties of AD-compacta.

M.\ Bell observed in \cite{bell} that consistently there exists an AD-compactum of weight $\leqslant\cc$ which is not a continuous
image of $\beta\omega\setminus\omega$. From our previous considerations we can conclude more
\begin{col}
Suppose that, for every AD-compact space $K$ of weight at most $\cc$, the space $C(K)$ embeds isometrically into $\ell_\infty / c_0$.
Then $\mathfrak{c}$ is a Kunen cardinal.
\end{col}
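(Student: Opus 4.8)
The plan is to notice that the proof of Theorem~\ref{isometr} ever uses only a single uniform Eberlein compactum, namely $K_2(E_0)$, and that this particular space happens to be an \emph{adequate} compactum; the corollary then follows by rerunning that proof verbatim under the weaker hypothesis about AD-compacta.

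First I would assume, towards a contradiction, that $\cc$ is not a Kunen cardinal and, exactly as at the beginning of the proof of Theorem~\ref{isometr}, reduce to a relation $E_0\subseteq\{(a,b)\in\RR^2:a<b\}$ with $E_0\notin\PP^2(\RR)$. Next I would check that the family
$$\mathcal{A}=\{A\in[\RR]^{\leqslant 2}:\forall a,b\in A\ (a<b\Rightarrow(a,b)\in E_0)\}$$
is adequate. Condition (i) is immediate, since any subset of a two-element set still fulfils the order condition. For condition (ii), if every finite subset of some $A\subseteq\RR$ belongs to $\mathcal{A}$, then $A$ cannot have three elements (a three-element subset would be a finite set not in $\mathcal{A}$, as members of $\mathcal{A}$ have at most two elements), so $|A|\leqslant 2$ and hence $A$, being a finite subset of itself, lies in $\mathcal{A}$. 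Therefore $K_2(E_0)=K(\mathcal{A})$ is an adequate compactum, in particular an AD-compactum, and, sitting inside $\{0,1\}^{\RR}$, it has weight at most $\cc$.

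With this in hand, the hypothesis of the corollary applies to $K_2(E_0)$ and produces an isometric embedding $T:C(K_2(E_0))\to\ell_\infty/c_0$. From here I would simply copy the remainder of the proof of Theorem~\ref{isometr}: define $\phi(r)=f_r$ with $f_r(x)=x(r)$, take a selector $\psi:\ell_\infty/c_0\to\ell_\infty$, set $g=\psi\circ T\circ\phi$, observe that
$$g\times g[\RR^2]\cap\{(x,y)\in(\RR^\omega)^2:\limsup_n|x(n)+y(n)|>1\}$$
is Borel in $g\times g[\RR^2]$, apply Proposition~\ref{1} to put its $g\times g$-preimage into $\PP^2(\RR)$, and intersect with $\{(x,y)\in\RR^2:x<y\}\in\PP^2(\RR)$ to conclude $E_0\in\PP^2(\RR)$, the desired contradiction.

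I do not expect a genuine obstacle here; the one spot deserving a moment's attention is the verification of condition (ii) of adequacy for $\mathcal{A}$, where it is precisely the cardinality restriction to $[\RR]^{\leqslant 2}$ that makes the condition hold for free. Incidentally, the same remark shows that all the spaces $K_n(E)$ and all the spaces $\sigma_n(\Gamma)$ are adequate compacta, so every compactum exploited in Section~3 is in fact AD-compact, which is what makes the strengthening from uniform Eberlein compacta to AD-compacta cost nothing.
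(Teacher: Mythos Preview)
Your proposal is correct and follows the same approach as the paper, which simply observes in one line that the space $K_2(E_0)$ used in the proof of Theorem~\ref{isometr} is adequate compact. You have merely supplied the (easy) details of the adequacy verification and made explicit that the remainder of the proof of Theorem~\ref{isometr} then goes through unchanged.
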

\begin{proof}
The space $K_2(E_0)$ considered in the proof of \ref{isometr} is adequate compact.
\end{proof}

\begin{col}
 Suppose that, for every AD-compact space $K$ of weight at most $\cc$, the space $C(K)$ embeds isomorphically into $\ell_\infty / c_0$.
Then for every binary relation $E\subseteq \RR$ and for all but finitely many positive integers n,
the set $E^{[n]}$ can be separated from $(E^c)^{[n]}$ by a member of
$\PP^n(\RR)$
\end{col}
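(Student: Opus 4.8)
The plan is to imitate the proof of Theorem~\ref{isomorph} essentially verbatim. The one new ingredient required is the observation that the concrete space used there, $K=\bigoplus_{n=1}^{\infty}K_n(E_0)\cup\{\infty\}$ (with $E_0=(E\cap<)$ as in that proof), is not merely uniform Eberlein but in fact AD-compact; granting this, the hypothesis of the corollary furnishes an isomorphic embedding $C(K)\hookrightarrow\ell_\infty/c_0$ and everything else is unchanged.

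First I would check that each $K_n(E_0)$ is an adequate compactum. For a natural number $n$ put
\[
\mathcal{A}_n=\{A\subseteq\RR:\ \lvert A\rvert\leqslant n\ \text{and}\ (a,b)\in E_0\ \text{whenever}\ a,b\in A\ \text{and}\ a<b\}.
\]
Condition (i) for an adequate family is clear, and for (ii): if every finite subset of $A$ belongs to $\mathcal{A}_n$ then $\lvert A\rvert\leqslant n$ (otherwise some $(n{+}1)$-element, hence finite, subset would fail to be in $\mathcal{A}_n$) and the condition on pairs holds because every doubleton of $A$ lies in $\mathcal{A}_n$; so $A\in\mathcal{A}_n$. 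Thus $K_n(E_0)=K(\mathcal{A}_n)$ is adequate; in particular $K_2(E_0)$ is, which is what the preceding corollary rested on.

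The main step is to prove that the class of AD-compacta is closed under the one point compactification of a discrete sum --- the AD-analogue of Proposition~\ref{sumakompaktow}. Let $\mathcal{A}_t$ be an adequate family on a set $X_t$, for $t\in T$, with the $X_t$ pairwise disjoint, and choose a set of ``markers'' $T$ disjoint from $\bigcup_t X_t$. On $X=\bigcup_{t\in T}X_t\cup T$ let $\mathcal{A}$ consist of all $A$ with $A\subseteq X_t$ and $A\in\mathcal{A}_t$ for some $t$, together with all $\{t\}\cup A'$ where $t\in T$ and $A'\in\mathcal{A}_t$. A direct verification --- using adequacy of each $\mathcal{A}_t$ and the fact that a doubleton meeting two distinct $X_s$, or consisting of two markers, or consisting of a marker $s$ and a point of $X_t$ with $s\neq t$, never lies in $\mathcal{A}$ (so any set all of whose finite subsets lie in $\mathcal{A}$ must be contained in one $X_t$ or in one $\{t\}\cup X_t$) --- shows that $\mathcal{A}$ is adequate, hence $K(\mathcal{A})$ is an adequate compactum. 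Finally, the map $q\colon K(\mathcal{A})\to\bigoplus_{t\in T}K_t\cup\{\infty\}$ that sends $\chi_{\{t\}\cup A'}$ to the copy of $\chi_{A'}\in K_t=K(\mathcal{A}_t)$ in the $t$-th summand and sends every $\chi_A$ with $A\cap T=\emptyset$ to $\infty$ is a continuous surjection: for each $t$ the set $\{\chi_A\in K(\mathcal{A}):t\in A\}$ is clopen (it is the set where the $t$-th coordinate equals $1$) and $q$ restricts on it to a homeomorphism onto $K_t$, while the $q$-preimage of a basic neighbourhood of $\infty$, namely the complement of finitely many summands $K_{t_1},\dots,K_{t_m}$, equals the clopen set $\{\chi_A:t_1,\dots,t_m\notin A\}$. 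Hence $\bigoplus_{t\in T}K_t\cup\{\infty\}$ is a continuous image of $K(\mathcal{A})$, so it is AD-compact.

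Taking $T=\{1,2,\dots\}$ and, for $t=n$, a disjoint copy of $(\RR,\mathcal{A}_n)$ in the above, we conclude that $K=\bigoplus_{n=1}^{\infty}K_n(E_0)\cup\{\infty\}$ is an AD-compactum; it obviously has weight $\leqslant\cc$. By the hypothesis $C(K)$ embeds isomorphically into $\ell_\infty/c_0$, and from here the proof of Theorem~\ref{isomorph} applies word for word: it produces, for all but finitely many $n$, a member of $\PP^n(\RR)$ separating $E_0^{[n]}$ from $(E_0^c)^{[n]}$; by symmetry the same holds for $E_1=(E\cap>)$; and the final Erd\H{o}s--Szekeres argument of that proof --- every sufficiently long finite sequence of reals contains a long strictly monotone subsequence --- patches these together, via the projections $\pi_A$, into a member of $\PP^n(\RR)$ separating $E^{[n]}$ from $(E^c)^{[n]}$ for the arbitrary relation $E$. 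The step I expect to demand the most care is the closure property: confirming that the marked family $\mathcal{A}$ satisfies both axioms of adequacy and that $q$ is continuous. The crux is that $K(\mathcal{A})$ carries ``extra'' points --- the $\chi_A$ with $A\cap T=\emptyset$ --- beyond those of $\bigoplus_{t\in T}K_t\cup\{\infty\}$, and they can all be crushed to $\infty$ exactly because each of them has every marker coordinate equal to $0$, which keeps $q$ continuous.
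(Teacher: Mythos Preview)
Your proposal is correct and follows the same outline as the paper: verify that each $K_n(E_0)$ is adequate, show that the one-point compactification of the discrete sum $K=\bigoplus_{n}K_n(E_0)\cup\{\infty\}$ is AD-compact, and then invoke the proof of Theorem~\ref{isomorph} unchanged.

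The only difference is in how the closure property is obtained. The paper simply cites Theorem~2.1 of \cite{Plebanek} for the fact that $K$ is AD-compact, whereas you supply a direct argument: you build an explicit adequate family on $\bigcup_t X_t\cup T$ by adjoining a marker $t$ to each $\mathcal{A}_t$, and then exhibit a continuous surjection $q$ from the resulting $K(\mathcal{A})$ onto $\bigoplus_t K_t\cup\{\infty\}$, collapsing the unmarked points to $\infty$. Your verification of adequacy and of continuity of $q$ is sound, and your closing remark correctly identifies why the surplus points cause no trouble. The trade-off is clear: the paper's route is a one-line citation, while yours is self-contained and makes explicit what Plebanek's theorem would otherwise be doing behind the scenes.
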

\begin{proof}
Since for any set $E\subseteq\RR$ the space $K_n(E)$ is adequate, it is AD-compact and by Theorem 2.1 in \cite{Plebanek}
the space
$K=\bigoplus_{n=1}^{\infty}K_n(E_0)\cup\{\infty\}$ considered in the proof of \ref{isomorph} is also
AD-compact.
\end{proof}

\end{document}